\newcommand\bb{\mathbb}
\renewcommand*\env@matrix[1][*\c@MaxMatrixCols c]{%
  \hskip -\arraycolsep
  \let\@ifnextchar\new@ifnextchar
  \array{#1}}
\theoremstyle{plain}
\newtheorem{thm}{Theorem}[section]
\theoremstyle{definition}
\theoremstyle{remark}
\newtheorem*{rem}{Remark}
\theoremstyle{plain}
\DeclareMathOperator{\Aut}{Aut}
\DeclareMathOperator{\sheafhom}{\mathscr{H}\text{\kern -3pt {\calligra\large om}}\,}
\DeclareMathOperator{\Pic}{Pic}
\DeclareMathOperator{\fake}{fake}
\newcommand{\Mod}[1]{\ (\mathrm{mod}\ #1)}
\newcommand{\FPP}{{\bb P_{\fake}^2}}
\DeclareMathOperator{\rr}{\sqrt{-7}}
\renewenvironment{proof}{%
    \vspace{-\parskip}\begin{oldproof}%
    }{%
    \end{oldproof}\vspace{-\parskip}%
}
\let\phi\varphi
\title{On the Geometry of a Fake Projective Plane with $21$ Automorphisms}
\author{Lev Borisov}
\address{Hill Center Department of Mathematics, Rutgers University, NJ 08854}
\email{borisov@math.rutgers.edu}
\urladdr{https://sites.math.rutgers.edu/\~{}borisov}
\author{Mattie Ji}
\address{Brown University, Department of Mathematics, Box 1917, 151 Thayer Street, Providence, RI 02912, USA}
\email{mattie\_ji@brown.edu}
\author{Yanxin Li}
\address{Hill Center Department of Mathematics, Rutgers University, NJ 08854}
\email{yl1203@scarletmail.rutgers.edu}
\author{Sargam Mondal}
\address{Middlesex County Academy for Science, Mathematics \& Engineering Technologies}
\email{sm2893@scarletmail.rutgers.edu}
\begin{document}

\begin{abstract}
A fake projective plane is a complex surface with the same Betti numbers as $\mathbb{C} \mathbb P^2$ but not biholomorphic to it. We study the fake projective plane $(a = 7, p = 2, \emptyset, D_3 2_7)$ in the Cartwright-Steger classification. In this paper, we exploit the large symmetries given by $\Aut(\FPP) = C_7 \rtimes C_3$ to construct an embedding of this surface into $\mathbb{C} \mathbb P^5$ as a system of $56$ sextics with coefficients in $\mathbb{Q}(\sqrt{-7})$. For each torsion line bundle $T \in \Pic(\FPP)$, we also compute and study the linear systems $|nH + T|$ with small $n$, where $H$ is an ample generator of the N\'eron-Severi group.
\end{abstract}

\subjclass[2020]{Primary: 14E25, 14Q10. Secondary: 14C20, 14J29.}

\maketitle

\section{Introduction}

 A question by Severi asked whether there exists a complex surface homeomorphic to $\bb C \mathbb P^2$ but not biholomorphic to it. A classical corollary in Yau's proof of the Calabi-Yau conjecture \cite{Yau1977CalabisCA} answered this question in the negative by showing that any complex surface homotopic to $\bb C\mathbb P^2$ must also be biholomorphic to $\bb C\mathbb P^2$. This prompted Mumford \cite{MumfordFPP} to construct the first example of a complex surface with the same Betti numbers as $\bb C\mathbb P^2$ but not biholomorphic to it. These surfaces are now called \emph{fake projective planes} (FPPs for short).

Fake projective planes have ample canonical bundle and hence are algebraic surfaces of general type by Chow's Theorem. They serve as canonical examples of complex surfaces of general type with the smallest Euler characteristics. Studying their geometry and classification is a subject of interest for many algebraic geometers.

For a fake projective plane $X$, the Hodge Theorem implies that $H^j(X, \bb C) = \bigoplus_{p+q = j} H^{p, q}(X)$. It then follows from the Hodge symmetry that the Hodge numbers of $X$ must be
\[h^{p, q}(X) = \begin{cases}
1, \text{if $(p, q) \in \{(0, 0), (1, 1), (2, 2)\}$}\\
0, \text{otherwise.}
\end{cases}\] Let $c_1$ and $c_2$ be the first and second Chern numbers of $X$ respectively, Noether's formula asserts that
\[ \frac{c_1^2 + c_2}{12} = \chi(0) = \sum_{j = 0}^2 (-1)^j h^{0, j}(X) =  1\]
Recalling that $c_2$ is equal to the topological Euler characteristics of $X$, it then follows that $c_1^2 = 9=3c_2$ and hence the Bogomolov-Miyaoka-Yau inequality is an equality. A classical result in \cite{Yau1977CalabisCA} asserts that this equality is true if and only if $X$ is the quotient of the complex $2$-ball $\bb B^2 \coloneqq \{(z, w) \in \bb C^2\ |\ |z|^2 + |w|^2 < 1\}$ by a torsion-free co-compact discrete subgroup of $PU(2, 1)$.

Much of the work in classifying all fake projective planes has been done by analyzing these discrete subgroups. In the same paper by Mumford \cite{MumfordFPP}, he noted that there exist only finitely many fake projective planes up to isomorphism. This is a consequence of Weil's result \cite{Weil1960} that discrete co-compact subgroups of $PU(2, 1)$ are rigid. Later on, Prasad and Yeung \cite{Prasad_Yeung_2007} showed that all fake projective planes must fall into one of $28$ distinct non-empty classes. Finally, Cartwright and Steger \cite{CARTWRIGHT201011} obtained a complete classification of $50$ complex conjugate pairs of fake projective planes in the aforementioned $28$ different classes. For a comprehensive survey on the history of classifying all fake projective planes, we refer the reader to the expository paper by Yeung \cite{Yeung_2008}.

In this paper, we will study the pair of fake projective planes given by $(a = 7, p = 2, \emptyset, D_3 2_7)$ and its complex conjugate in the Cartwright-Steger classification. It is one of the only $3$ pairs of fake projective planes whose automorphism group has the largest cardinality, being the unique non-abelian semi-direct product $C_7 \rtimes C_3$. The other two pairs are those of the fake projective plane $(C20, p = 2, \emptyset, D_3 2_7)$ and the Keum's fake projective plane \cite{keum2006fake} $(a = 7, p = 2, {7}, D_3 2_7)$.

\begin{rem}
Explicit equations of $(a = 7, p = 2, \emptyset, D_3 2_7)$ and its complex conjugate were computed by Borisov and Keum in \cite{Borisov_2020}. These surfaces are cut out in their bicanonical embeddings by $84$ cubic equations in ten variables, with coefficients in the ring $\mathbb Z[\sqrt{-7}]$. However, current techniques do not allow one to determine which of the choices of $\sqrt{-7}\in \mathbb C$ leads to the ball quotient with the group generators given by Cartwright and Steger for the 
 $(a = 7, p = 2, \emptyset, D_3 2_7)$ case and which leads to its complex conjugate (which simply means conjugating all of the entries of the group elements). Everything we do in this paper applies equally to both surfaces, but for the sake of definitiveness, we denote  by $\FPP$ the surface given by the equations of \cite{Borisov_2020} with $\sqrt{-7} = {\mathrm i}\sqrt 7$.
\end{rem}

It is known that the Picard group of $\FPP$ is given by $\Pic(\FPP) \cong \bb Z H \oplus (\bb Z/2\bb Z)^4$ where
$H$ is the unique ample divisor class on $\FPP$ such that $3H$ is the canonical class. The equations of \cite{Borisov_2020} describe the bicanonical embedding of $\FPP$ in $\bb C\mathbb P^9$ via $10$ global sections of $H^0(\FPP, 6H)$. 

Our first goal is to study how the automorphism group $\Aut(\FPP) \cong C_7 \rtimes C_3$ of $\FPP$ acts on $\Pic(\FPP)$. Clearly any automorphism has to fix $H$, so the question amounts to investigating what happens on the torsion subgroup. For each nontrivial torsion element $T \in \Pic(\FPP)$, we will compute explicit representatives of $3H + T$ as non-reduced (double) cuts in $6H$ in Section~\ref{sec:3H_Tor}. We focus on three torsion classes  - $D, D_1, D + D_1$ - which are the representatives of the $C_7$ orbits of the automorphism action. 

Another question we are interested in is how the linear systems $|nH + T|$ behave for $n \leq 5$ and torsion line bundles $T \in \Pic(\FPP)$. This is because as $n$ increases, the expansion in dimensions tends to trivialize phenomena in lower dimensions (for example, the Kodaira Vanishing Theorem becomes applicable for $n \geq 4$). Specifically, we show that

\begin{thm}\label{thm:main_divisor_relation}
On the fake projective plane $\FPP$ and for torsion line bundle $T \in \Pic(\FPP)$,
\begin{enumerate}
    \item \label{item:2H}$h^0(\FPP, 2H + T) = 0$.
    \item \label{item:4H}$4H + T$ is base point free if and only if $T$ is nontrivial.
    \item \label{item:5H}$5H + D$ is very ample, but $5H$ is not very ample.
\end{enumerate}
\end{thm}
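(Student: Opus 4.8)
The plan is to separate the numerical part of the statement, which follows from Riemann--Roch and Kodaira-type vanishing, from the geometry of the linear systems, which I would attack using the $\Aut(\FPP)$-action and the explicit model. From $K=3H$ and $K^2=3c_2(\FPP)=9$ one has $H^2=1$, and $\chi(\mathcal O_{\FPP})=1$; every torsion class is numerically trivial and $2$-torsion (since $\Pic(\FPP)_{\mathrm{tors}}\cong(\bb Z/2\bb Z)^4$), so Riemann--Roch gives, for all torsion $T$,
\[
\chi(\FPP,\,nH+T)\;=\;1+\tfrac12(nH+T)\cdot\bigl((n-3)H+T\bigr)\;=\;1+\tfrac{n(n-3)}{2},
\]
whence $\chi(2H+T)=0$, $\chi(4H+T)=3$, $\chi(5H+T)=6$. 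For $n\ge 4$ the twist $nH+T-K=(n-3)H+T$ is ample, so $h^1=h^2=0$ by Kawamata--Viehweg and $h^0(4H+T)=3$, $h^0(5H+T)=6$ for \emph{every} $T$; thus $|4H+T|$ and $|5H+T|$ define rational maps to $\bb P^2$ and $\bb P^5$, and parts (2)--(3) are about their base loci and injectivity.

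For (1), a nonzero section of $2H+T$ would give an effective divisor $E$ with $E\cdot H=2$, and any integral component of $E$ would be a curve $C$ with $1\le C\cdot H\le 2$ (each component meets the ample $H$ positively, and the intersection numbers sum to $2$). But $\FPP$ has no integral curve of $H$-degree $\le 2$ --- this is a known feature of fake projective planes, and in any case it can be read off the explicit equations for $\FPP$ --- so $h^0(2H+T)=0$. (By Serre duality this also gives $h^0(\FPP,H+T)=h^1(\FPP,2H+T)=0$, using $-T=T$.)

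The negative statements in (2) and (3) I would obtain from the symmetry. Since $g^*K=K$ and $\Pic(\FPP)$ has no $3$-torsion, $\Aut(\FPP)$ fixes $H$ in $\Pic(\FPP)$ and hence acts on each $\mathcal O(nH)$, while it merely permutes the classes $nH+T$. The subgroup $C_7$ has exactly three isolated fixed points on $\FPP$; an equivariant Riemann--Roch (holomorphic Lefschetz) computation pins down the $\Aut(\FPP)\cong C_7\rtimes C_3$-module structures of $H^0(4H)$ and $H^0(5H)$ (the former being one of the two $3$-dimensional irreducibles). At a $C_7$-fixed point $p$ one then finds that the character by which $C_7$ acts on the fibre $\mathcal O(4H)_p$ is absent from $H^0(4H)$, so the evaluation $H^0(4H)\to\mathcal O(4H)_p$ is zero and $p$ is a base point of $|4H|$; and that the character of $\mathcal O(5H)_p$ coincides with that of one of the two $C_7$-eigendirections in $T_p\FPP$, so the first-order jet map at $p$ misses a summand and $5H$ is not very ample. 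For the positive statements I would work on the explicit model: for $T\ne 0$, basepoint-freeness of $4H+T$ becomes a finite check on the $56$ sextics (equivalently on the $|6H|$-model of \cite{Borisov_2020}), made rigorous by reduction modulo a split prime of $\bb Q(\rr)$ as in \cite{Borisov_2020}; and ``$5H+D$ very ample'' is precisely the embedding $\FPP\hookrightarrow\bb P^5$ promised in the introduction. Here $D$ lacks the full $C_7$-symmetry that caused the resonances above --- its stabilizer in $\Aut(\FPP)$ is at most $C_3$ --- so no obstruction is forced, and one certifies that the $|5H+D|$-map is a closed embedding by exhibiting its homogeneous ideal (the $56$ sextics over $\bb Q(\rr)$), checking smoothness of the zero locus via the Jacobian criterion, and matching invariants ($\deg=(5H+D)^2=25$, $\chi(\mathcal O)=1$, $p_g=q=0$) to recognise the image as $\FPP$. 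The main obstacle is this last step: handling the explicit equations over $\bb Q(\rr)$ rigorously enough to certify smoothness and injectivity of the $|5H+D|$-map --- essentially all the work lies there --- whereas, once the $\Aut(\FPP)$-module structures of $H^0(4H)$, $H^0(5H)$ and the local fixed-point characters are known, the failures of basepoint-freeness for $4H$ and very-ampleness for $5H$ are immediate.
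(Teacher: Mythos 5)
The numerical preliminaries are fine (Riemann--Roch gives $\chi(nH+T)=1+\tfrac{n(n-3)}2$, and Kodaira/Kawamata--Viehweg kills $h^1,h^2$ for $n\ge4$, so $h^0(4H+T)=3$ and $h^0(5H+T)=6$ for all torsion $T$). But two of your key steps do not go through.

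For part~(\ref{item:2H}) your argument bottoms out at the assertion that $\FPP$ has no integral curve of $H$-degree at most~$2$, and you present this as a known feature of fake projective planes. It is not known at that level of generality, and for this surface it is exactly equivalent to the statement being proven: an integral curve of $H$-degree~$1$ is an irreducible member of some $|H+T'|$ (which would force $h^0(2H)\ne0$ by squaring a section), and an integral curve of $H$-degree~$2$ is an irreducible member of some $|2H+T'|$. The paper cites \cite{Galkin_2023} only for $T\in\{0,D\}$; the other $14$ classes are new, and it handles them by a different mechanism: if $\alpha\in H^0(2H+D_i)$ and $r\in H^0(4H+D_i)$ were both nonzero, then $\alpha\otimes r\in H^0(6H)$ would be a hyperplane in $\bb CP^9$ containing the divisor of $r$, and this is refuted by finding $10$ points on that divisor whose $U_0,\dots,U_9$-evaluation matrix has nonzero determinant.

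The $C_7$-character mechanism you propose for the base locus of $|4H|$ also fails. Reading the weights off Equation~(\ref{eq:c7}), the fibre of $\mathcal O(6H)$ at the $C_7$-fixed points $p_1,p_2,p_3$ carries the characters $\zeta,\zeta^2,\zeta^4$, so the fibre of $\mathcal O(12H)=\mathcal O(4H)^{\otimes3}$ carries $\zeta^2,\zeta^4,\zeta^1$, and taking cube roots mod~$7$ the fibre of $\mathcal O(4H)$ at $p_1,p_2,p_3$ carries $\zeta^3,\zeta^6,\zeta^5$ --- which is exactly the set of $C_7$-characters $\{\zeta^3,\zeta^6,\zeta^5\}$ of $H^0(4H)$. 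So at each fixed point the fibre character \emph{does} occur in $H^0(4H)$, and the evaluation map is not forced to vanish by equivariance. What the weight analysis does give is that two of the three eigensections vanish at each $p_i$ (e.g.\ $r_3$, of weight~$\zeta^3$, is forced to vanish at $p_2$ and $p_3$ but not at $p_1$). Whether the third section also vanishes is precisely the computational fact the paper supplies: the $21$ quadratics cutting out $\{r_3=0\}$ contain no $U_7^2,U_8^2,U_9^2$ terms. The analogous Lefschetz argument you sketch for ``$5H$ is not very ample'' is likewise unsubstantiated: you posit a coincidence between the fibre-times-cotangent characters of $\mathcal O(5H)$ and the $H^0(5H)$ characters without computing either, and given that the corresponding coincidence for $4H$ was the \emph{wrong} one, this should worry you. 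The paper instead shows the image of $\FPP$ under $|5H|$ in $\bb CP^5$ has Hilbert polynomial $\tfrac12(5n-1)(5n-2)-3$, strictly less than $h^0(5nH)$, which already rules out an isomorphism onto the image. Your proposed treatments of ``$4H+T$ basepoint free for $T\ne0$'' and ``$5H+D$ very ample'' via explicit equations over $\bb Q(\sqrt{-7})$ do match the paper's computational strategy.
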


\begin{rem}
Note that the case of $T \in \{0, D\}$ in Theorem~\ref{thm:main_divisor_relation}(\ref{item:2H}) is a consequence of Theorem 5.3 in \cite{Galkin_2023}. Our contribution is for the other $14$ torsion line bundles. As a consequence (or rather in the proof of) Theorem~\ref{thm:main_divisor_relation}(\ref{item:5H}), we also produce an explicit embedding of $\FPP$ as the zero set of $56$ sextics in $\bb C\mathbb P^5$ with coefficients in $\bb Q(\sqrt{-7})$.
\end{rem}

\begin{rem}
For all fake projective planes $X$ with known explicit equations (see \cite{Borisov_2020}, \cite{Borisov-Aut21}, \cite{borisov2020new}, and \cite{borisov2020journey}), their embeddings were constructed via $|6H_{X}|$, where $H_X$ is the unique ample divisor class on $X$ such that $3H_X = K_X$ the canonical divisor class. In \cite{borisov2023realizing}, the authors were able to further embed Keum's fake projective plane with $5H_X$. Whether or not $6H_{X}$ is very ample for all fake projective planes $X$ is still an open question, but Theorem~\ref{thm:main_divisor_relation}(\ref{item:5H}) shows that a similar conjecture is false for the case of $5H_{X}$. The observations that $4H$ is not base point free and $5H$ is not very ample came as a surprise to us. After all, there is no obvious difference between our example and that of Keum's fake projective plane considered in \cite{borisov2023realizing}, where this was not the case.
\end{rem}

A common feature in the research on fake projective planes is liberal use of mathematical computing. The proof of Theorem~\ref{thm:main_divisor_relation} depends heavily on the use of the computer algebra systems \texttt{Mathematica} \cite{Mathematica}, \texttt{Magma} \cite{Mag}, and \texttt{Macaulay2} \cite{M2}. Our code repository \cite{BJLM-code} accompanying this paper is available on GitHub and on our webpage.

\subsection{Outline}
In Section~\ref{sec:3H_Tor}, we first describe a method of computing $3H + D, 3H + D_1, 3H + D + D_1$ and determining the group relations of the torsion divisors. We also find the explicit quadratic polynomials vanishing on $3H + T$ for each torsion divisor $T$. In Section~\ref{sec:4H}, we compute explicit representatives of $|4H|$ and prove the ``only if" direction of Theorem~\ref{thm:main_divisor_relation}(\ref{item:4H}). In Section~\ref{sec:5H_Tor}, we use the explicit representatives of $|4H|$ to compute explicit maps given by sections of $5H$ and $5H + D$ into $\bb C\mathbb P^5$ to prove Theorem~\ref{thm:main_divisor_relation}(\ref{item:5H}), and we also compute explicit equations for the zero locus of the $C_7$-equivariant sections of $5H$. In Section~\ref{sec:4H_Tor}, we use the sections of $5H$ to compute explicit equations for the zero locus of the sections $4H + T$ for $T \neq 0$ a torsion line bundle. This proves the ``if" direction of Theorem~\ref{thm:main_divisor_relation}(\ref{item:4H}). In the same section, we also use this result to prove Theorem~\ref{thm:main_divisor_relation}(\ref{item:2H}). 

\subsection*{Acknowledgements}\label{sec:acknowledgements}
This research was conducted while the second author was participating during the 2023 DIMACS REU program at the Rutgers University Department of Mathematics, mentored by the first author. We are grateful to Lazaros Gallos, Kristen Hendricks, and the Rutgers University DIMACS for organizing the REU program and making this project possible. We thank the anonymous referee for multiple useful comments on the first version of the paper.%

\section{Computing torsion divisors with $3H + T$}\label{sec:3H_Tor}

In this section, we construct explicit non-reduced cuts of $\FPP$ representing $3H + T$ for each nontrivial torsion line bundle $T \in \Pic(\FPP)$. This is done in two steps. We first find the cuts in a finite field with the code \texttt{3H-Reduction.txt} in our code repository \cite{BJLM-code}. We then lift the solutions back to $\bb Q(\sqrt{-7})$ in the first part of the file \texttt{3H-Torsion.nb}.

In addition, we compute the group relations between the explicit representatives we found. Finally, we also compute quadratic polynomials vanishing on each curve in the class of $3H + T$ in the embedding of $\FPP$ in $\bb C\mathbb P^9$. These calculations are done in the remainder of \texttt{3H-Torsion.nb}.

\subsection{Construction of the non-reduced cuts}\label{subsec:cuts}

We would like to construct the torsion classes of the Picard group. Suppose $T$ is a nontrivial torsion class, then Theorem 2.2 of \cite{Galkin_2023} asserts that $H^0(\FPP, 3H + T)$ is one-dimensional. Suppose we take generator $\alpha \in H^0(\FPP, 3H + T)$, then since all nontrivial torsion elements of the Picard group have order $2$, we have $\alpha^{\otimes 2} \in H^0(\FPP, 6H)$. Thus, we would like to construct $3H + T$ from elements in $H^0(\FPP, 6H)$ whose zeros form a nonreduced, more specifically double, curve.

We start by examining how the automorphism group of $\FPP$ interacts with the torsion classes. In \cite{Borisov_2020}, the automorphism group $\mathrm{Aut}(\FPP)\cong C_7 \rtimes C_3$  acts on the homogeneous coordinates of $\bb C\mathbb P^9$ as follows:
\begin{multline}\label{eq:c3}
g_3(U_0:U_1:U_2:U_3:U_4:U_5:U_6:U_7:U_8:U_9) = (U_0:U_2:U_3:U_1:U_5:U_6:U_4:U_8:U_9:U_7)
\end{multline}
\begin{multline}\label{eq:c7}
g_7(U_0:U_1:U_2:U_3:U_4:U_5:U_6:U_7:U_8:U_9) \\ = (U_0: \zeta ^6U_1: \zeta ^5U_2: \zeta ^3U_3:\zeta^1 U_4:\zeta ^2U_5:\zeta ^4U_6:\zeta ^1 U_7:\zeta ^2U_8:\zeta^4U_9), 
\end{multline}
where $g_3$ and $g_7$ are generators of $C_3$ and $C_7$ respectively and $\zeta$ is the $7$-th root of unity $\exp(2\pi i/7)$. 

The linear span of $U_0,\ldots,U_9$ splits as a direct sum of the trivial representation of $\mathrm{Aut}(\FPP)$, spanned by $U_0$, and three $3$-dimensional representations of it. 
It was observed in \cite{Borisov_2020} that $U_0=0$ cuts out a double curve on $\FPP$, which therefore corresponds to a nontrivial divisor class $D$ in the torsion of $\Pic(\FPP)$ which is invariant under $\mathrm{Aut}(\FPP)$. In the other direction, since any $\mathrm{Aut}(\FPP)$-invariant nontrivial torsion element of $\Pic(\FPP)$ must give rise to a one-dimensional subrepresentation in $H^0(\FPP,6H)$, we see that $D$ is the only such element.

The order of the automorphism group is odd, so by Maschke's Theorem, we get the splitting of  $\mathrm{Aut}(\FPP)$
representations over the field $ (\bb Z/2\bb Z)$
\begin{equation}\label{eq:dsum}
{\mathrm{Torsion}}(\Pic(\FPP)) \cong (\bb Z/2\bb Z)D \oplus (\bb Z/2\bb Z)^3.
\end{equation}
The action of the automorphism $g_3$ on the set of $7$ nonzero elements of the direct summand $(\bb Z/2\bb Z)^3$ of \eqref{eq:dsum} must have at least one fixed point, and we will denote it by $E$. We know that $E$ is not invariant with respect to the entire
 $\mathrm{Aut}(\FPP)$, so all the other nonzero elements of $(\bb Z/2\bb Z)^3$ are $C_7$ translates of $E$. Thus
 their stabilizers are conjugates of $\langle g_3\rangle$ in $C_7 \rtimes C_3$, so  $E$ is the only one of them that is fixed by $g_3$. We have thus proved that there are exactly three nontrivial torsion classes 
$$
D,~E,~D+E
$$
in $\Pic(\FPP)$ that are invariant under the action of $C_3=\langle g_3\rangle$. 

Our goal is now to find these classes explicitly, by finding the corresponding sections $\alpha^{\otimes 2} \in H^0(\FPP, 6H)$. 
For a $C_3$-invariant torsion divisor class, the corresponding bicanonical section $\alpha^{\otimes 2}$ must be invariant under the action of $C_3$, up to scaling, so it must lie in an eigenspace of this action. It is reasonable to consider the $1$-eigenspace first, i.e. to look for $C_3$-invariant cuts of $\FPP$ of the form 
\begin{equation}\label{cutinv}
U_0+a_1(U_1+U_2+U_3) + a_2(U_4+U_5+U_6) + a_3(U_7 + U_8 + U_9)=0.
\end{equation}
Our approach was rather natural. We first found a solution over a finite field, then got a good enough $p$-adic approximation and then recognized the resulting coefficients $a_i$ as elements of $\mathbb Q(\sqrt{-7})$. Importantly, we did not have to be rigorous at the intermediate steps as long as we verified that the final cuts give us double curves on $\FPP$.

We know that the cut \eqref{cutinv} must give a nonreduced curve, so it will remain nonreduced after a reduction modulo a prime $q$ in the ring $\mathbb Z[\sqrt{-7}]$. We can pick a prime $p$ in $\mathbb Z$ which splits in $\mathbb Z[\sqrt{-7}]$, i.e. $(-7)$ is a square in $\mathbb{F}_p$. By picking one choice of the square root, we can realize the reduction of $\FPP$ modulo $q$ as a subscheme in $\mathbb F_p\mathbb P^9$. The possible cuts \eqref{cutinv} modulo $p$ can then be found by an exhaustive search over all possible $(a_1,a_2,a_3)\in \mathbb F_p^3$, using Magma to see if the cut is nonreduced.
Specifically, the smallest such prime for which the reduction of equations of $\FPP$ preserves the Hilbert polynomial is $p=11$, and we picked $\sqrt{-7}$ to be $2\hskip-3pt\mod 11$. Then, there were only three nonreduced cuts for $(a_1,a_2,a_3)$ given by $(0, 0, 0), (7, 0, 0)$ and $(8,7,7)$ respectively. The first triple corresponds to $U_0=0$ and $3H + D$.

For each of the last two cuts modulo $11$, our goal was to keep lifting the coefficients $a_i$ modulo higher and higher powers of $11$ while keeping the corresponding curves singular, in the appropriate sense. More precisely, we made sure that the curves were at least singular at enough points to ensure unique lifting. To begin with, we obtained several points on the last two curves. The curves had a tangent space of dimension two at each of these points, and our goal was to keep lifting the coefficients to powers of $11$ while keeping these points singular, by simultaneously lifting tangent vectors.  Specifically, suppose one of the points is $x = [x_0: ...: x_9]$. We can set without loss of generality $x_0 = 1$. Then we consider 2 tangent vectors to the curve of the form $v = (v_0, v_1, ..., v_9)$. We set similarly $v_0 = 0$, and then $v_1 = 1, v_2 = 0$ for one and $v_1 = 0, v_2 = 1$ for the other to make them linearly independent (we were somewhat lucky that we were able to do it, otherwise the code would have been a bit more complicated). To get the tangent vectors in $T_x \FPP$, we substituted in $x_i + v_i t$'s  for $U_i$'s for each of the 84 equations and the linear cut we obtained, and solved for conditions such that the coefficient of $t$ is zero, which means that the vector is orthogonal to the gradient of all these equations. We then lifted the coefficients to modulo ${11^2}$ as follows. We substituted $\sqrt{-7}$ with the appropriate value modulo $11^2$, and adjusted the linear cuts by adding $11(a_1'(U_1+U_2+U_3) + a_2'(U_4+U_5+U_6) + a_3'(U_7 + U_8 + U_9))$, for some unknown $(a_1', a_2', a_3')$, which ensures that it agrees with the original cuts$\mod{11}$, and similarly the points on these cuts and the tangent vectors at these points. Then we solved for the same equations modulo $11$ to obtain the unknown coefficients $a_i$ modulo $11^2$. 

We repeated the same process with the new data to consecutively lift them to modulo $11^d$ for larger $d$. In the end we obtained the coefficients for the cuts modulo $11^{21}$. From these we recovered the original coefficients in $\mathbb{Q}(\sqrt{-7})$ as follows.
If an element $a\in \mathbb{Q}(\sqrt{-7})$ satisfies an integer relation $c_1 + c_2 \sqrt{-7}  + c_3  a  = 0$, it leads to $c_1 +c_2[\sqrt{-7}]+c_3[a]=0$ for the corresponding $11$-adic numbers. If we have approximations $[a]_{11^{21}}$ and $[\sqrt{-7}]_{11^{21}}$ of $[a]$ and $[\sqrt{-7}]$ respectively, then we get an integer relation
$$c_1 + c_2 [\sqrt{-7}]_{11^{21}} + c_3 [a]_{11^{21}} + c_4 11^{21}=0.
$$
If $c_1$, $c_2$ and $c_3$ are small, then the above relation can be recovered by using a lattice reduction algorithm to find equations of small norm in the lattice of integer relations on $1,[\sqrt{-7}]_{11^{21}},[a]_{11^{21}}$ and $11^{21}$. 
In this way, we obtained the two cuts as
\begin{equation}\label{cut1}
U_0 + \frac{1}{2}(1 + \sqrt{-7})(U_1 + U_2 + U_3)=0
\end{equation}
and
\begin{equation}\label{cut2}
U_0 + (-5 + \sqrt{-7})(U_1 + U_2 + U_3) + (4 - 4\sqrt{-7})(U_4 + U_5 + U_6) - 4(U_7 + U_8 + U_9)=0.
\end{equation}

\subsection{Verification of the non-reduced cuts}\label{subsec:verify_cuts}

How does one verify that, say, the first of these equations is a square $\alpha^{\otimes 2}$ of the section of $3H+T$ for some torsion divisor $T$? An optimistic approach would be to add the ideal generated by the linear function in \eqref{cut1} to the ideal $I$ that gives $\FPP$ and then use Magma to compute the radical of the resulting ideal. Unfortunately, the radical calculation was prohibitively time-consuming, at least with our hardware. What worked instead was the approach of \cite{Borisov_2020} which we will now describe. We used Mathematica to find, with high numerical accuracy, a large number of random points on the putative reduced curve $C$ such that $2C$ is cut out by the linear equation \eqref{cut1}. Then we computed, numerically with high accuracy, the space of homogeneous quadratic polynomials in $U_0,\ldots,U_9$ which vanish at these points. After (numerical) row reduction, we could recognize the coefficients of these polynomials as being close to elements of the field $\mathbb Q(\sqrt{-7})$, which gave us a putative ideal $J$, with $28$ generators of degree $2$ such that $I+J$ cuts out $C$, scheme theoretically. Indeed, we verified in Magma, now with exact coefficients, that the Hilbert polynomial of (the quotient of) the ideal $I+J$ is that of a curve of the correct degree $18$. Importantly, we checked that 
$$J^2 + I,~J^2 + (U_0 + \frac{1}{2}(1 + \sqrt{-7})(U_1 + U_2 + U_3))+I,~(U_0 + \frac{1}{2}(1 + \sqrt{-7})(U_1 + U_2 + U_3))+I$$
have the same Hilbert polynomial, which means that they are equal up to saturation. This verifies that \eqref{cut1} cuts out $2C$ where $C$ is the curve given by $I+J$. Although we don't need this fact, it turns out that $I$ is contained in $J$, which can also be seen by computing Hilbert polynomials. We used the same approach to verify that \eqref{cut2} gives a double curve and to find the defining equations of that curve.

Having  verified that \eqref{cut1} and \eqref{cut2} give double curves, the $C_3$-invariance now implies that 
one of them describes the double curve for $E$ and the other describes the double curve for $D+E$. Since \eqref{cut1} looks simpler than \eqref{cut2}, it was tempting to conjecture that it would correspond to $E$, and indeed this turned out to be the case. However, additional work was required to verify it, which we will now describe. The essential difference between $E$ and $D+E$ is that the $C_7$-translates of the latter generate the whole torsion subgroup, whereas for the former we only get $(\mathbb Z/2\mathbb Z)^3$.

The fifteen nontrivial torsion elements of $\Pic(\FPP)$ correspond to $U_0=0$ and the $C_7$-translates of the equations 
in \eqref{cut1} and \eqref{cut2} under \eqref{eq:c7}. We denote the line bundles that correspond to the $(g_7)^{k-1}$-translates of the cut \eqref{cut1} by $D_k$ for $k=1,\ldots,7$ and we denote the ones that correspond to $(g_7)^{k-1}$-translates of the cut \eqref{cut2} by $D_{7+k}$. We can recover the group operation as follows. 
Suppose three distinct curves $C_1$, $C_2$ and $C_3$ on $\FPP$ are the zeros of the sections of $3H+L_1,3H+L_2,3H+L_3$ respectively, for nontrivial torsion line bundles $L_i$. Then we can look for sections of $12H$ which  vanish on the union of these curves. 
We see that $H^0(\FPP,12H - C_1-C_2-C_3)=H^0(\FPP,3H-(L_1+L_2+L_3))$ is the kernel of the restriction map from $H^0(\FPP,12H)$ to  $\bigoplus_{i=1}^3 H^0(C_i,12H)$. We know that $L_1+L_2+L_3=0$ if and only if this kernel is zero.
We don't even need to compute the entire restriction map. The space of $H^0(\FPP,12H)$ happens to be the $55$-dimensional space of quadratic polynomials in $U_0,\ldots, U_9$, so if we find enough points on $C_i$ to have the matrix of values at these points have the maximum possible rank $55$ (can be done with the interval arithmetic of Mathematica), then we know that $L_1+L_2+L_3=0$.

We did this matrix rank computation for every triple in the $15$ nontrivial torsion classes to determine the group law. The complete table is in the file \texttt{3H-Torsion.nb}, there we verify that $E$ indeed corresponds to \eqref{cut1}. We checked that 
 $D_1, ..., D_7$ can be written in terms of the basis $\{D_1, D_2, D_3\}$ as
\begin{equation}\label{eq:basis}
\begin{split}
D_4 &= D_1 + D_2, \quad D_5 = D_2 + D_3, \quad D_6 = D_1 + D_2 + D_3, \quad D_7 = D_1 + D_3
\end{split}
\end{equation}
which implies that $C_7$-translates of $D_1$ lie in a $(\mathbb Z/2\mathbb Z)^3$ subgroup of $\Pic(\FPP)$. 

\section{Computing $4H$}\label{sec:4H}

In this section, we compute the linear system $|4H|$. This will provide a proof for the ``only if" direction of Theorem~\ref{thm:main_divisor_relation}(\ref{item:4H}). We follow the general method of  \cite[Section 3.2]{borisov2023realizing} by Borisov and Lihn, where they computed the linear system $|4H|$ on Keum's fake projective plane. Our specific constructions are realized in the file \texttt{4H.nb} in our code repository \cite{BJLM-code}.

By the Riemann-Roch and the Kodaira vanishing theorems, we have $\dim_{\bb C} H^0(\FPP, 4H) = 3$. The line bundle $4H$ is preserved by automorphisms of $\FPP$, thus the action of $\mathrm{Aut}(\FPP)$  can be lifted to an action of a central extension of it on $H^0(\FPP, 4H)$. Since  $\mathrm{Aut}(\FPP)$ has trivial Schur multiplier, it can be made to act on $H^0(\FPP, 4H) $.
This space then splits into three one-dimensional $C_7$-eigenspaces, which, by the holomorphic Lefschetz fixed-point formula, have eigenvalues $\xi^3, \xi^6, $ and $\xi^5$ respectively, and $C_3$ then permutes the eigenspaces. Let $r_3, r_6, $ and $r_5$ be the sections generating each eigenspace. Because the lift of $g_3$ still has order three, we can scale $r_i$ so that the action of $g_3\in C_3$ sends $r_3 \to r_6 \to r_5 \to r_3$.

Let's define $s_i = r_i^{\otimes 3}$ for $i = 3, 6, 5$ and $d = r_3 \otimes r_6 \otimes r_5$. Note that $s_3, s_6, s_5, d \in H^0(\FPP, 12H)$ and thus can be represented as quadratic polynomials in $U_0, ..., U_9$. Since $s_3$ has $C_7$ weight $3 \times 3 \equiv 2 \Mod{7}$ and $d$ has weight $3 + 5 + 6 \equiv 0 \Mod{7}$ and is $C_3$ invariant, we can write $s_3$ and $d$ as
\begin{equation}\label{eq:s_3_and_d}
\begin{split}
s_3 &\coloneqq b_5 U_1 U_3 + b_3 U_4^2 + b_8 U_0 U_5 + b_4 U_2 U_6 + b_2 U_4 U_7 + b_1 U_7^2 +b_7 U_0 U_8 + b_6 U_2 U_9 \\
d &\coloneqq e_1 U_0^2 + e_2 (U_1 U_4 + U_2 U_5 + U_3 U_6) + e_3(U_1 U_7 + U_2 U_8 + U_3 U_9)
\end{split}
\end{equation}
Note that $s_5$ and $s_6$ may be obtained as $C_3$-translates of $s_3$.

Our goal is then to solve for $s_3$ and $d$ explicitly. Then we could solve for $\{s_3 = d = 0\}$ to obtain the section $r_3$ and use the $C_3$ action to find $r_6$ and $r_5$.

\subsection{Solving for $s_3$ and $d$}

The overall idea is that we have the identity
$$s_3 s_5 s_6 - d^3 = 0$$ 
on $\FPP$ which provides us equations on the coefficients of $s_3$ and $d$.
Specifically, we may compute random points on $\FPP$ with high accuracy and evaluate the expression $s_3 s_5 s_6 - d^3$ on these points. This will produce relations on the coefficients $b_1, ..., b_8$ and $e_1, e_2, e_3$, which we can then solve in the Magma file \texttt{4H-Quadratic.txt}.

However, the process above was taking quite long. To reduce the run-time, we also computed some additional constraints on the coefficients before passing it down to the \texttt{4H-Quadratic.txt}. The additional constraints are calculated as follows:

\begin{itemize}
    \item We observe that there are three $C_7$ fixed points on $\FPP$, in a $C_3$ orbit, given by
\begin{equation}\label{eq:c7-fixed-pts}
\begin{split}
p_1 &\coloneqq [0 : 0: 0: 0: 0: 0: 0: 1: 0: 0], \quad p_2 \coloneqq [0 : 0: 0: 0: 0: 0: 0: 0: 1: 0], \\
p_3 &\coloneqq [0 : 0: 0: 0: 0: 0: 0: 0: 0: 1]
\end{split}
\end{equation}
We note that $p_2$ must be in the curve $\{r_3 = 0\}$, because the only quadratic monomial that does not vanish on $p_2$ is $U_8^2$, which has $C_7$ weight $4 \Mod{7}$. On the other hand, we observe that $s_3 = r_3^3$ has weight $2 \Mod{7}$ and thus does not have a term $U_8^2$, so $r_3$ must vanish on $p_2$. Similarly, we also have that $p_3 \in \{r_3 = 0\}$ as $U_9^2$ has weight $1 \Mod{7}$.
    \item  It follows that  $s_3$ vanishes on $p_2$ and $p_3$ up to multiplicity $3$. We then compute the order $3$ formal neighborhoods of $p_2$ and $p_3$ respectively (in practice, we only needed them up to order $2$). Then we solve for the conditions of $s_3$ being identically zero on the formal neighborhoods of $p_2$ and $p_3$ up to order $2$. This reduces the number of independent coefficients in $s_3$ from $8$ to $6$.
\end{itemize}

After solving the relations on the remaining $9$ coefficients produced by the random points, we obtain the following solutions:
\begin{equation}\label{eq:s3}
\begin{split}
s_3 &= \frac{1}{8} \biggl(\frac{1}{29} (1- 27 \rr) U_1 U_3 + \frac{4}{29} (101- \rr) U_4^2 + \frac{8}{29} (15+\rr) U_0 U_5 + \frac{8}{29} (1+2 \rr) U_2 U_6 \\ &+ 8 U_4 U_7 - 4 U_0 U_8 + \frac{1}{29} (101- \rr) U_0 U_8 + \frac{1}{58} (101-\rr) U_2 U_9 + \frac{1}{58} \rr (101-\rr) U_2 U_9 \biggr)
\end{split}
\end{equation}
\begin{equation}\label{eq:d}
d = \frac{1}{812} \biggl((35 - 17 \sqrt{-7}) U_0^2 + (70 - 34\sqrt{-7})(U_1 U_4 + U_2 U_5 + U_3 U_6) + (21 + 13 \sqrt{-7})(U_1 U_7 + U_2 U_8 + U_3 U_9) \biggr)    
\end{equation}

\begin{rem}
While the authors of \cite{borisov2023realizing} solved the system $s_3 s_5 s_6 - d^3 = 0$ using a finite field search and lifting the coefficients back to $\bb Q(\sqrt{-7})$, we instead solve for the irreducible components of the ideal formed by the equations in the file \texttt{4H-Quadratic.txt} with Magma directly and found the exact solutions in $\bb Q(\sqrt{-7})$ within a reasonable time. 
\end{rem}

\subsection{Solving for $r_3=0$}

We found random points on $r_3=0$ by computing random solutions to $\{s_3 = d = 0\}$ on $\FPP$ with high accuracy. We then solved for the quadratic polynomials vanishing on $r_3$ and obtained $\dim_{\bb C} H^0(\FPP, 12H - 4H) = 21$ equations, which can be shown to cut out the curve in the correct class by looking at its Hilbert polynomial. The equations can be found in \texttt{Equations/4H\_one\_section.txt}. Now we can give a proof that $4H + 0$ is not base point free.

\begin{proof}[Proof of ``only if" direction of Theorem~\ref{thm:main_divisor_relation}(2)]
Equations in \texttt{Equations/4H\_one\_section.txt} have no monomials of the form $U_7^2, U_8^2, $ or $U_9^2$. This means that $p_1, p_2, p_3 \in \{r_3 = 0\}$. The $C_3$ action on $\FPP$ permutes $p_1 \to p_2 \to p_3 \to p_1$, so it follows that $r_5$ and $r_6$ also vanish on these $3$ points.
\end{proof}

\section{Computing $5H + D$ and $5H$}\label{sec:5H_Tor}
In this section, we prove Theorem~\ref{thm:main_divisor_relation}(\ref{item:5H}). For $5H + D$, we will construct an embedding of $\FPP$ in $\bb C\mathbb P^5$ given by its sections to show it is very ample in Section~\ref{subsec:5HD}.  For $5H$, we will construct the image of $\FPP$ under its map and show the image is singular in Section~\ref{subsec:5H}. We will also verify that $5H$ is base point free after we compute the quadratic polynomials vanishing on the global sections of $5H$ in Section~\ref{subsec:5H_Quadratic}. Most of the relevant computations are laid out in the Mathematica file \texttt{5H-Torsion.nb} in our code repository \cite{BJLM-code} except for the check that the sections of $5H$ do not have any common zeros, which is done in the Magma file \texttt{5H-intersection.txt}.

\subsection{For $5H + D$} \label{subsec:5HD}

With the computations of $3H + D$ in Section~\ref{sec:3H_Tor} and $4H$ in Section~\ref{sec:4H}, we can now find six linearly independent sections on $5H + D$. Observe that $12H = (5H + D) + (3H + D) + 4H$, so we can compute for linear combinations of quadratic monomials in $U_0,\ldots,U_9$ that vanish on random points from both $4H$ and $3H + D$. This produced $6$ degree two polynomials in these variables that represent the $6$ global sections of $5H + D$.

These $6$ degree two polynomials produce a map $\FPP \to \bb C\mathbb P^5$, so we can construct with high accuracy random points of the image of $\FPP$ in $\bb C\mathbb P^5$. We then solved for sextic equations in terms of the $6$ quadratic polynomials and found $56$ sextics with coefficients in $\bb Q(\sqrt{-7})$. To check that this is indeed an embedding, we follow the verification process carried out in \cite{borisov2023realizing}. The relevant verification files are in the folder \texttt{Verification/5H+D}.

\subsection{For $5H$}\label{subsec:5H}

Because $h^0(\FPP, 3H) = 0$, we can't use the method in Section~\ref{subsec:5HD} to find the sections of $5H$. Instead, we observe that $30H = 5H + 4H + \sum_{i = 1}^7 (3H + D_i)$, so we can look for linear combinations of quintic monomials vanishing on random points  on $4H, 3H + D_1, ..., 3H + D_7$. This produced $6$ quintic polynomials in variables $U_0, ..., U_9$ which we will name $Z_1, ..., Z_6$.

Let $X$ denote the image of $\FPP$ in $\bb C\mathbb P^5$ given by $Z_1, ..., Z_6$. We can compute random points on $X$ and solve for sextics in variables $Z_1, ..., Z_6$ that vanish on the image. This produced $59$ (as opposed to $56$) sextic equations with coefficients in $\bb Q(\sqrt{-7})$. Moreover, the Hilbert polynomial of the quotient of the polynomial ring in six variables by the ideal generated by the above $59$ sextics is $p(n)=\frac 12(5n-1)(5n-2) - 3$, which is less than the dimension $\frac 12 (5n-1)(5n-2)$ of $H^0(\FPP,5n H)$. By \cite[Exercise II.5.9(b)]{Hartshorne}, this implies that $5H$ is not very ample.

We investigated the image $X$ of $\FPP$ under this map further. It easy to see that the image is two-dimensional by calculating the Jacobian matrix of $Z_i$ at a random point of $\FPP$, with enough accuracy. We checked that there were no higher degree equations besides the sextic equations calculated for $X$, because these $59$ equations generate a prime ideal, see Magma file \texttt{5H-is-prime.txt}. Furthermore,
we checked that $X$ is singular at the following $3$ points:
\begin{equation}\label{eq:singular_pts}
q_1 \coloneqq [1: 0: 0: 0: 0: 0], q_2 \coloneqq [0: 1: 0: 0: 0: 0], q_3 \coloneqq [0: 0: 0: 1: 0: 0]
\end{equation}
by computing the rank of the Jacobian matrix at these points. 

\begin{rem}
We suspect that these are the only singular points of $X$ and that $X$ is not normal at these points.
Note that these points are fixed under the $C_7$ automorphism subgroup and are thus the images of the points 
$p_1,p_2,p_3$ that were the base points of $4H$. This indicates that there is perhaps some common reason that underlies both $4H$ having base points and the failure of $5H$ being very ample. However, we do not know what this reason might be.
\end{rem}

\subsection{Quadratic polynomials vanishing on $5H$}\label{subsec:5H_Quadratic}

For each quintic equation corresponding to each $Z_i$, we can find random points on $Z_i=0$ by computing the intersection of the quintic equation with random linear cuts. Out of the roots we computed, we only keep the points that are non-zero on the equations of $4H$ and $3H + D_i$ for $i = 1, ..., 7$. The points left will then be on the zero set of the section of $5H$.

We then solve for quadratic polynomials in $U_0, .., U_9$ which vanish on the remaining points and finds $h^0(\FPP, 12H - 5H) = 15$ quadratic  equations defining $Z_i$. We also check in the file \texttt{5H-intersection.txt} that all $90$ quadratic polynomials do not have any common zeros, so $5H$ is indeed base point free.%

\section{Computing $4H+T$}\label{sec:4H_Tor}
In this section, we use the results of Section~\ref{subsec:5H_Quadratic} to compute quadratic polynomials vanishing on sections of $4H + T$ for all nontrivial torsion classes $T \in \Pic(\FPP)$. These computations will also be used to prove Theorem~\ref{thm:main_divisor_relation}(\ref{item:2H}) and the ``if" direction of Theorem~\ref{thm:main_divisor_relation}(\ref{item:4H}). The relevant computations for the quadratic polynomials vanishing on $4H + T$ and the proof of Theorem~\ref{thm:main_divisor_relation}(\ref{item:2H}) can be found in the {Mathematica} file \texttt{4H-Torsion.nb} from our code repository \cite{BJLM-code}. The checks that $4H + T$ is base point free are in the {Magma} file \texttt{4H-Torsion-intersection.txt}.

It suffices for us to compute the quadratic polynomials on $4H + D$, $4H + D_1$, and $4H + D_8$, as the rest can be generated by the $C_7$ action. For $L \in \{D, D_1, D_8\}$, we observe that $12H = (4H + L) + (3H + L) + 5H$. We can compute the quadratic polynomials vanishing on $4H + L$ in two steps:

\begin{enumerate}
    \item First, we can find the $3$ sections of $4H + L$ as  quadratic polynomials in $U_0, ..., U_9$ vanishing on random points of $3H + L$ and one section of $5H$ (in our case, we chose $Z_2$).
    \item These sections are technically quadratic polynomials. Since we have computed explicit equations on sections of $5H$ in Section~\ref{subsec:5H_Quadratic} and of $3H + L$ in Section~\ref{subsec:verify_cuts}, we can take random cuts on each quadratic polynomials, only keeping the points that are non-zero on these equations. The remaining points will be random points on the sections of $4H + L$ as divisors. We can then find $h^0(\FPP, 12H - (4H + L)) = 21$ quadratic polynomials for each section.
\end{enumerate}
Note that we also only need to do this on one section of $4H + L$, as the other two can be populated by the $C_3$ action. Now we will finish the proofs of Theorem~\ref{thm:main_divisor_relation}.

\begin{proof}[Proof of ``if" direction of Theorem~\ref{thm:main_divisor_relation}(\ref{item:4H})]
    It suffices for us to check this for $4H + D, 4H + D_1, $ and $4H + D_8$. Since we have computed the quadratic polynomials vanishing on each already, we can simply check that they don't have common zeros in the {Magma} file \texttt{4H-Torsion-intersection.txt}.
\end{proof}

\begin{proof}[Proof of Theorem~\ref{thm:main_divisor_relation}(\ref{item:2H})]
The case for $2H$ and $2H + D$ is implied by Theorem 5.5 of \cite{Galkin_2023}. For the other $14$ torsion classes, it suffices for us to check this on $2H + D_1$ and $2H + D_8$ because of the $C_7$ action.

If $h^0(\FPP, 2H + D_1) \neq 0$, then let $\alpha \in H^0(\FPP, 2H + D_1)$ be some nontrivial section. Consider the section $\alpha \otimes r \in H^0(\FPP, 6H)$ where $r$ is any nontrivial section of $H^0(\FPP, 4H + D_1)$. Since the basis of $H^0(\FPP, 6H)$ is given by $U_0, ..., U_9$, this means that $\alpha \otimes r$ lies in some hyperplane in $\bb C\mathbb P^9$. However, since we have found random points on $4H + D_1$ previously in this section, a direct check in \texttt{4H-Torsion.nb} shows that there exist $10$ points of one section of $H^0(\FPP, 4H + D_1)$ whose determinant is non-zero, hence we have a contradiction. A similar check is done for $4H + D_8$ to show that $h^0(\FPP, 4H + D_8) = 0$.
\end{proof}

\begin{rem}
Since $h^0(\FPP, 2H) = 0$, we know that $h^0(\FPP, H + T) = 0$. This is because the existence of any nontrivial section in $h^0(\FPP, H + T)$ would imply $h^0(\FPP, 2H) \neq 0$ by tensoring with itself.
\end{rem}

\end{document}